\numberwithin{equation}{section}
\theoremstyle{plain}
\newtheorem*{mainthm}{Main Theorem}
\newtheorem{lemma}[equation]{Lemma}
\newtheorem{proposition}[equation]{Proposition}
\newtheorem{corollary}[equation]{Corollary}
\newtheorem{cordef}[equation]{Corollary/Definition}
\theoremstyle{remark}
\newtheorem{remark}[equation]{Remark}
\newtheorem*{Remark}{Remark}
\theoremstyle{definition}
\newtheorem{definition}[equation]{Definition}
\newtheorem{example}[equation]{Example}
\def\Id{\operatorname{Id}}
\def\End{\operatorname{End}}
\def\Hom{\operatorname{Hom}}
\def\Sym{\operatorname{Sym}}
\newcommand{\bZ}{\mathbb{Z}}
\newcommand{\bQ}{\mathbb{Q}}
\newcommand{\bR}{\mathbb{R}}
\newcommand{\bC}{\mathbb{C}}
\newcommand{\Aut}{\mathrm{Aut}}
\newcommand{\Res}{\mathrm{Res}}
\newcommand{\id}{\mathrm{id}}
\newcommand{\bS}{\mathbb{S}}     
\newcommand{\SL}{\mathrm{SL}}
\newcommand{\SU}{\mathrm{SU}}
\newcommand{\GL}{\mathrm{GL}}
\newcommand{\calD}{\mathcal{D}} 
\newcommand{\calH}{\mathcal{H}} 
\newcommand{\calA}{\mathcal{A}}
\newcommand{\calV}{\mathcal{V}}
\begin{document}
 \title[]{A realization for a $\bQ$-Hermitian variation of Hodge structure of Calabi-Yau type with real multiplication} 
\author[Z. Zhang]{Zheng Zhang} 
\address{Department of Mathematics \\ Stony Brook University \\ Stony Brook NY 11794}
\email{zzhang@math.sunysb.edu}
\thanks{The author was partially supported by NSF grants DMS-1254812 and DMS-1361143 (PI: R. Laza).}
\date{\today}

\bibliographystyle{alpha}
\maketitle

\begin{abstract}
We show that the $\bQ$-descents of the canonical $\bR$-variation of Hodge structure of Calabi-Yau type over a tube domain of type $A$ can be realized as sub-variations of Hodge structure of certain $\bQ$-variations of Hodge structure which are naturally associated to abelian varieties of (generalized) Weil type. 
\end{abstract}

\section*{Introduction}
It is an interesting problem to construct variations of Hodge structure (VHS) of geometric origin over Hermitian symmetric domains. The case of abelian varieties was studied by Shimura, Mumford and their collaborators during 1960's. For instance, abelian varieties with PEL-structures (polarization, endomorphism ring and level structure) are parametrized by certain arithmetic quotients of Hermitian symmetric domains (i.e. connected Shimura varieties). Here we are interested in the case of Calabi-Yau manifolds. Known examples of Calabi-Yau manifolds parameterized by Hermitian symmetric domains include those constructed in \cite{V}, \cite{Bor}, \cite{Rohde}, \cite{Rohde_2} and \cite{vGG}. 

Abstractly, we consider unconstrained VHS of Calabi-Yau (CY) type over Hermitian symmetric domains (called \textit{Hermitian VHS of CY type}), which are induced by equivariant, holomorphic, horizontal embeddings of Hermitian symmetric domains into the classifying spaces of Hodge structures of CY type. These have been studied by Gross \cite{Gro} (the tube domain case) and by Sheng and Zuo \cite{ZS} (the non-tube domain case). Based on their work, Friedman and Laza \cite{FL} classify Hermitian $\bR$-VHS of CY type. Specifically, over every irreducible Hermitian symmetric domain $\calD$, there exists a canonical $\bR$-VHS of CY type which corresponds to the minimal cominuscule representation (c.f. \cite{FL} Definition $2.5$) associated to the domain; any other irreducible CY Hermitian $\bR$-VHS on $\calD$ can be obtained from the canonical one via some standard operations (c.f. \cite{FL} Theorem $2.22$). Some partial analysis of Hermitian $\bQ$-VHS of CY type has also been done in \cite{FL} and \cite{FL_2}. In particular, Hermitian $\bQ$-VHS of CY $3$-fold type whose generic endomorphism algebras are totally real fields have been studied and classified (under a natural assumption) in \cite{FL} Theorem $3.18$ and \cite{FL_2}. 
 
It is natural to investigate the possibility of constructing Hermitian VHS of CY type from families of abelian varieties. In this short note, we give a motivic realization of the $\bQ$-descents of the canonical $\bR$-VHS of CY $n$-fold type over the Hermitian symmetric domain $(A_{2n-1}, \alpha_n)$ (and even more generally we allow real multiplication by an arbitrary totally real field). More precisely, we show that these $\bQ$-VHS of CY type can be realized as sub-VHS of the natural $\bQ$-VHS of weight $n$ associated to families of abelian varieties of (generalized) Weil type (see Corollary/Definition \ref{Weil}). 

\begin{mainthm}
Let $n$ be a positive integer. Let $E$ be a CM field with a totally real subfield $E_0$,  and set $d = [E_0 : \bQ]$. Also let $\calH$ be a product of $d$ irreducible Hermitian symmetric domains of type $A$: $\calH = (A_{2n-1}, \alpha_n) \times (A_{2n-1}, \alpha_{p_2}) \times \cdots \times (A_{2n-1}, \alpha_{p_d})$ with $p_i \equiv n$ (mod $2$) and $p_i < n$ for $2 \leq i \leq d$.  
\begin{itemize}
\item[(a)] There exists a family of abelian varieties of generalized Weil type (see Corollary/Definition \ref{Weil}) $\pi: \calA \rightarrow \calH$ over $\calH$ with the generic endomorphism algebra containing $E$.
\item[(b)] The VHS $R^n\pi_*\bQ$ contains an irreducible Hermitian $\bQ$-VHS of CY type whose generic endomorphism algebra is $E_0$. 
\end{itemize}
\end{mainthm}

This theorem generalizes the results in \cite{Lom} and \cite{CaFl}. Specifically, the case $E = \bQ(\sqrt{-r})$ ($r  \in \bZ^+$) and $n=2$ is essentially Theorem $3.8.1$ and Corollary $3.9$ of \cite{Lom}; the case $E = \bQ(\sqrt{-3})$ and $n=3$ is discussed in Section $3$ of \cite{CaFl}. 

\begin{Remark}
Following \cite{FL}, we say a Hermitian $\bR$-VHS of CY $3$-fold type is \textit{primitive} if the associated Hermitian symmetric domain is an irreducible tube domain of real rank $3$, and the Hermitian VHS of CY type is the canonical one over the domain. There are four primitive Hermitian VHS of CY $3$-fold type, which are listed in Corollary $2.29$ of op. cit.. The following are some results concerning their motivic realizations. 

The case $(C_3, \alpha_3)$ is classical and well-known. One can simply take the middle cohomology of an abelian threefold, which will contain a Hodge structure of CY type. At the other extreme, Deligne showed that there is no VHS of abelian variety type over $(E_7, \alpha_7)$. Thus the canonical VHS of CY type over $(E_7, \alpha_7)$ can not come from VHS of abelian variety type. The case $(D_6, \alpha_6)$ is much more subtle. On one hand, the Hermitian $\bQ$-VHS of CY type constructed in \cite{FL} Theorem $3.18$ and \cite{FL_2} is associated with Hermitian symmetric domains of mixed type $D$ (see Page $532$ of \cite {M}) in the presence of non-trivial real multiplication, over which there is no irreducible VHS of abelian variety type. On the other hand, assuming there is no non-trivial real multiplication, similar methods as in this note (but more representation theory) can be applied to realize the $\bQ$-descents of the unique irreducible Hermitian $\bR$-VHS of CY type contained in $\Sym^2 \calV$, where $\calV$ is the canonical CY $\bR$-VHS over the domain $(D_6, \alpha_6)$. 
\end{Remark}

After reviewing some background material on Hermitian VHS and Hodge representations in Section \ref{Pre}, we construct in Section \ref{AV} families of abelian varieties of generalized Weil type whose generic endomorphism algebras must contain a CM field $E$. This is a generalization of abelian varieties of Weil type (see for example \cite{vG} Definition $4.9$), where $E = \bQ(\sqrt{-r})$. Such families have been constructed in \cite{Sabelianvar} (see also Section $9.6$ of \cite{BLabelianvar}), but we shall reconstruct them from the perspective of Hodge representations (c.f. \cite{GGK} Chapter IV and \cite{M} Chapter $10$). In Section \ref{CY}, we prove Part (b) of the Main Theorem. One key ingredient is a proof of rationality of a certain representation using the ideas from Section $3$ of \cite{FL}.

\subsection*{Acknowledgement} The author is grateful to his advisor Radu Laza for introducing him to this problem, for encouragements and helpful suggestions. He would also like to thank Matthew Young for reading the draft and useful comments. 

\section{Preliminaries} \label{Pre}
In this section, we review some background on Hermitian VHS, especially in the case of abelian variety type (Hodge structures of level $1$) and CY type (effective Hodge structures of weight $k$ with $\dim H^{k,0} = 1$). 

Let $\calD = G(\bR)/K$ be an irreducible Hermitian symmetric domain, where $G$ is the almost simple and simply connected $\bR$-algebraic group associated to $\calD$. We first recall that irreducible Hermitian symmetric domains are classified by the root system $R$ of $G({\bC})$ together with a special root $\alpha_s$ (see for example \cite{M} Page $479$). In particular, an irreducible Hermitian symmetric domain of type $A$ is isomorphic to $\SU(p,q)/\mathrm{S}(\mathrm{U}(p) \times \mathrm{U}(q))$ for some $p,q \in \bZ^+$ (N.B. the associated simply connected algebraic group is $\SU(p,q)$) and corresponds to the pair $(A_{p+q-1}, \alpha_p)$. By choosing a suitable arithmetic subgroup of $\mathrm{Hol}(\calD)^+$, one can also assume that the associated algebraic group $G$ is defined over $\bQ$.

Except for the classical cases such as principally polarized abelian varieties or K3 surfaces, it is in general hard to understand the image of period maps because of Griffiths transversality. However, if a closed horizontal subvariety of a classifying space for certain Hodge structures satisfy certain natural assumptions as in \cite{FL} Definitions $1.1$ and $1.2$, then it must be an unconstrained Mumford-Tate domain (see Page $12$ of \cite{GGK}) and hence a Hermitian symmetric domain (c.f. Theorem $1.4$ of \cite{FL}). Following Definition $2.1$ of \cite{FL}, we will call such a VHS a \textit{Hermitian VHS}. 

Following Deligne, to give a Hermitian $\bQ$-VHS, one must give a representation $\rho: G \rightarrow GL(V)$ defined over $\bQ$ and a compatible polarization $Q$ on $V$ with $\rho(V) \subseteq \Aut(V,Q)$. As explained in Step $4$ of (IV.A) in \cite{GGK}, a compatible polarization typically exists and is unique. Also, without loss of generality, we assume that $\rho$ is irreducible over $\bQ$. 

We recall that the necessary and sufficient conditions for $\rho: G \rightarrow GL(V)$ together with a reference point $\varphi: U(1) \rightarrow \bar{G}$ ($\bar{G} = G/Z(G)$ is the adjoint form of $G$) to give a Hermitian VHS are as follows: there exists a reductive algebraic group $M \subseteq GL(V)$ defined over $\bQ$ (the generic Mumford-Tate group of the VHS) and a morphism of algebraic groups $h: \bS \rightarrow M_{\bR} \subseteq GL(V_{\bR})$ ($\bS = \Res_{\bC/\bR} \mathbb{G}_m$) such that 
\begin{itemize}
\item[(1)] The morphism $h$ defines a Hodge structure on $V$; 
\item [(2)] The representation $\rho$ factors through $M$ and $\rho(G) = M_{\mathrm{der}}$;
\item [(3)] The induced map $\bar{h} : \bS/\mathbb{G}_m \rightarrow M_{\mathrm{ad},\bR} = \bar{G}$ is conjugate to $\varphi: U(1) \rightarrow \bar{G}$.
\end{itemize}

\begin{Remark} \leavevmode
\begin{itemize} 
\item[(1)] Following \cite{GGK}, we call $\rho$ a Hodge representation. 
\item[(2)] Sub-representations of $V$ correspond to sub-Hermitian VHS and operations on representations correspond to the same operations on Hermitian VHS.
\end{itemize}
\end{Remark}

Satake \cite{Satake} and Deligne \cite{Dshimura} (especially Table $1.3.9$) classified Hodge representations of abelian variety type (see also Chapter $10$ of \cite{M}). Based on the earlier work of Gross \cite{Gro} and Sheng and Zuo \cite{ZS}, Friedman and Laza \cite{FL} classify Hermitian $\bR$-VHS (or Hermitian $\bQ$-VHS which remain irreducible over $\bR$) of CY type. Over every irreducible Hermitian symmetric domain $\calD$, there exists a canonical $\bR$-VHS $\calV$ of CY type; any other irreducible CY Hermitian $\bR$-VHS on $\calD$ can be obtained from the canonical $\calV$ by taking the unique irreducible CY factor of $\Sym^{\bullet} \calV$ or, for non-tube domains, $\Sym^{\bullet} \calV\{-\frac{a}{2}\}$ ($a \in \bZ$, $\{\}$ denotes the half twist, see \cite{FL} Section $2.1.3$). To describe the canonical VHS of CY type, we set $(R, \alpha_s)$ to be the pair corresponding to the domain $\calD$ and $G$ to be the associated algebraic group. Also let $\rho: G \rightarrow \GL(V)$ be a representation defined over $\bQ$ such that $V_{\bR} := V \otimes_{\bQ} \bR$ is still an irreducible representation. For tube domains, if the representation $V_{\bC} := V_{\bR} \otimes_{\bR} \bC$ of $G(\bC)$ is irreducible (in other words, the representation $V_{\bR}$ is of real type, see for example Page $88$ of \cite{GGK}) and has highest weight $\varpi_{s}$ which is the fundamental weight corresponding to the special root $\alpha_s$, then $\rho$ gives rise to a Hermitian $\bQ$-VHS of CY type whose scalar extension to $\bR$ is the canonical Hermitian $\bR$-VHS of CY type. We refer the readers to \cite{FL} for the description of the canonical CY variation on non-tube domains. 

To study Hermitian $\bQ$-VHS, it is important to consider the generic endomorphism algebras (or the endomorphism algebras of the corresponding Hodge representations).  Using them and the following lemma, we obtain decompositions (over $\bR$) of the VHS. 

\begin{lemma} \label{decomposition}
Let $V$ be a $\bQ$-vector space and $F$ be a finite field extension of $\bQ$ with $F \subseteq \End(V)$.
Denote the Galois closure of $F$ by $\tilde{F}$. Also let $G = \mathrm{Gal}(\tilde{F}/\bQ)$ and $H = \mathrm{Gal}(\tilde{F}/F)$. Note that we have $H \subseteq G$ and $[G:H] = [F: \bQ]$. Then the following statements hold.
\begin{itemize}
\item[(1)] There exists an $1-1$ correspondence between $G/H$ and $\Hom(F, \tilde{F})$ given by $gH \mapsto (a \mapsto g(a) )$. 
\item[(2)] Let $d = [F: \bQ]$ and $\sigma_1, \cdots, \sigma_d$ be the $d$ elements in $\Hom(F, \tilde{F})$. For any finite field extension $k \supseteq \tilde{F}$, the space $V_k := V \otimes_{\bQ} k$ decomposes into eigenspaces of the $F$-action. In other words, we have $V_k = \bigoplus_{i=1}^d V_i$ with $V_i = V \otimes_{F, \sigma_i} k = \{v \in V_k \mid  a (v) = \sigma_i(a) v,\ \forall a \in F \}$. 
\end{itemize}
\end{lemma}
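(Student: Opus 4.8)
The plan is to handle the two parts in turn; part (1) is a direct verification and part (2) reduces to the Chinese Remainder Theorem applied to $F \otimes_\bQ k$. For part (1), I would check that $gH \mapsto (a \mapsto g(a))$ is well defined (if $h \in H = \mathrm{Gal}(\tilde{F}/F)$ then $h|_F = \id$, so $(gh)|_F = g|_F$), injective (if $g|_F = g'|_F$ then $g^{-1}g'$ fixes $F$, hence lies in $H$, so $gH = g'H$), and surjective: since $\tilde{F}/\bQ$ is normal (being a Galois closure), any $\bQ$-embedding $F \hookrightarrow \tilde{F}$ extends to an automorphism of $\tilde{F}$, i.e.\ equals $g|_F$ for some $g \in G$. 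In particular $|\Hom(F,\tilde{F})| = [G:H] = [F:\bQ] = d$, and the embeddings $\sigma_1, \dots, \sigma_d$ appearing in part (2) are precisely the restrictions $g_1|_F, \dots, g_d|_F$ for a set of coset representatives $g_1, \dots, g_d$ of $H$ in $G$.

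For part (2), the crux is that $F \otimes_\bQ k$ is isomorphic, as a $k$-algebra, to the product $\prod_{i=1}^d k$. Since $\mathrm{char}\,\bQ = 0$, the extension $F/\bQ$ is separable, so writing $F = \bQ(\theta) \cong \bQ[x]/(f)$ with $f = \prod_{i=1}^d (x - \sigma_i(\theta))$ the factorization of the minimal polynomial over $k \supseteq \tilde{F}$ (the roots $\sigma_i(\theta)$ being pairwise distinct by separability), the Chinese Remainder Theorem gives
\[
F \otimes_\bQ k \;\cong\; k[x]/(f) \;\cong\; \prod_{i=1}^d k[x]/(x - \sigma_i(\theta)) \;\cong\; \prod_{i=1}^d k,
\]
where the $i$-th factor projection restricts on $F$ to $a \mapsto \sigma_i(a)$. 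Let $e_1, \dots, e_d \in F \otimes_\bQ k$ be the associated complete system of orthogonal idempotents. The ring $F \otimes_\bQ k$ acts on $V_k = V \otimes_\bQ k$ through the given $F$-action on $V$ and scalars on $k$; the idempotents $e_i$ then decompose $V_k = \bigoplus_{i=1}^d e_i V_k$, and on $e_i V_k$ every $a \in F$ acts as multiplication by $\sigma_i(a)$. Setting $V_i := e_i V_k$ yields $V_i = \{v \in V_k \mid a(v) = \sigma_i(a)v\ \forall a \in F\}$, and the remaining identification $V_i = V \otimes_{F,\sigma_i} k$ follows by base-changing the product decomposition along $V_k = V \otimes_F (F \otimes_\bQ k) = \bigoplus_{i=1}^d V \otimes_{F,\sigma_i} k$.

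There is no essential difficulty here: the only points that need a little attention are the appeal to normality of $\tilde{F}/\bQ$ in the surjectivity step of part (1), and keeping track of which idempotent $e_i$ corresponds to which embedding $\sigma_i$, so that the eigenvalue statement comes out with the correct index. Note also that the argument uses no finiteness hypothesis on $V$: the decomposition of $V_k$ is obtained purely by base-changing the $k$-algebra decomposition of $F \otimes_\bQ k$ along $V \otimes_\bQ k = V \otimes_F (F \otimes_\bQ k)$, which is valid for an arbitrary $\bQ$-vector space $V$.
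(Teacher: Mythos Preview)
Your proof is correct and entirely standard. The paper itself does not prove this lemma but simply refers the reader to Section~2.4 of \cite{vG_real}; your argument via the Chinese Remainder decomposition of $F \otimes_\bQ k$ and the associated idempotents is precisely the kind of proof one would find there, so there is nothing substantive to compare.
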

\begin{proof}
See for example Section $2.4$ of \cite{vG_real}.
\end{proof}

\section{Families of abelian varieties of generalized Weil type} \label{AV}
Let us start by recalling the definition of abelian varieties of Weil type following \cite{vG}, which is a crucial ingredient for the constructions in \cite{Lom} and \cite{CaFl}.
\begin{definition} \label{Weil AV}
An abelian variety of Weil type of dimension $2m$ consists of a pair $(X,\bQ(\sqrt{-r}))$ with $X$ an abelian $2m$-fold and $t: \bQ(\sqrt{-r}) \hookrightarrow \End(X) \otimes \bQ$ an imaginary quadratic field such that for all $x \in \bQ(\sqrt{-r})$ the endomorphism $t(x)$ has $m$ eigenvalues $x$ and $m$ eigenvalues $\bar{x}$ on $T_0X$.
\end{definition}

\begin{remark}
Given an abelian $2m$-fold $(X, \bQ(\sqrt{-r}))$ of Weil type, we can construct a $\bQ(\sqrt{-r})$-vector space $V$ of dimension $2m$, a Hermitian form $H: V \times V \rightarrow \bQ(\sqrt{-r})$ of signature $(m,m)$ (thus one obtains a Hodge representation of $\SU(V, H)$) and a lattice $\Lambda \subseteq V$ as in \cite{vG} $5.2$, $5.3$. Conversely, given $(V, H, \Lambda)$ as above, one can construct a family of abelian varieties of Weil type over the irreducible Hermitian symmetric domain $(A_{2m-1}, \alpha_m)$ (see $5.4 - 5.10$ of op. cit.). This is the most convenient viewpoint for our generalization.
\end{remark}

We will consider abelian varieties whose endomorphism algebra contains an arbitrary CM field. In what follows, we let $E$ be a CM field with a totally real subfield $E_0$, and set $d = [E_0: \bQ]$. Also let $I = \Hom(E_0, \bR)$. Clearly, the set $I$ contains $d$ elements which will be denoted by $\sigma_1, \cdots, \sigma_d$.

Let  $n$ be a positive integer, and choose for each $i$ ($1 \leq i \leq d$) positive integers $p_i, q_i$ with $p_i + q_i = 2n$. Now for every $\sigma_i \in I$, we consider the $\bR$-algebraic group $G_{i,\bR} = \SU(p_i, q_i)$ and the corresponding irreducible Hermitian symmetric domain $\calD_i = (A_{p_i+q_i-1}, \alpha_{p_i})$ of type $A$.

\begin{lemma} \label{G_1}
There exists an algebraic group $G$ defined over $E_0$ such that $G \otimes_{E_0, \sigma_i} \bR \cong G_{i,\bR} = \SU(p_i, q_i)$. 
\end{lemma}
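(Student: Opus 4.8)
The plan is to realize $G$ as the special unitary group of a suitable Hermitian form over the CM field $E$, viewed as a vector space over $E_0$ only after restriction of scalars; the point is that such a group automatically acquires the product structure over $\bR$ dictated by the real places of $E_0$. Concretely, I would let $E = E_0(\sqrt{\delta})$ for some totally negative element $\delta \in E_0$ (so $E$ is a CM field with maximal totally real subfield $E_0$), and I would look for an $E$-vector space $W$ of dimension $2n$ together with an $E/E_0$-Hermitian form $\Phi\colon W \times W \to E$. For each real place $\sigma_i$ of $E_0$ there are exactly two extensions to $E$, complex conjugate to each other, so $W \otimes_{E_0,\sigma_i}\bR \cong \bC^{2n}$ and $\Phi$ induces a Hermitian form over $\bC$; its signature $(p_i,q_i)$ is a free parameter that I get to prescribe. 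Then $G := \mathrm{SU}(W,\Phi)$, a priori an algebraic group over $E_0$, satisfies $G\otimes_{E_0,\sigma_i}\bR \cong \mathrm{SU}(p_i,q_i) = G_{i,\bR}$ by construction.

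The key steps, in order, are: (i) fix the CM extension $E/E_0$ and record that the place $\sigma_i$ of $E_0$ has precisely the pair of complex-conjugate extensions to $E$, so that $E\otimes_{E_0,\sigma_i}\bR \cong \bC$; (ii) produce an $E/E_0$-Hermitian form $\Phi$ on $E^{2n}$ whose signature at $\sigma_i$ is the prescribed $(p_i,q_i)$ — this is a weak approximation / Hasse principle statement for Hermitian forms over number fields, asserting that one can choose a Hermitian form with arbitrarily prescribed local signatures at the archimedean places (subject only to the rank being $p_i+q_i=2n$ at every place, which holds here); (iii) set $G = \mathrm{SU}(W,\Phi)$ and check, using that special unitary groups commute with base change, that $G\otimes_{E_0,\sigma_i}\bR$ is the special unitary group of the base-changed Hermitian form, hence isomorphic to $\mathrm{SU}(p_i,q_i)$.

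The main obstacle is step (ii): one must know that Hermitian forms over a CM field $E/E_0$ are classified by their rank together with their local invariants, and in particular that the local signatures at the infinite places of $E_0$ can be prescribed independently. This is standard — it follows from Landherr's theorem and the Hasse principle for Hermitian forms over number fields (see e.g. Scharlau, \emph{Quadratic and Hermitian Forms}) — but it is the one genuinely nontrivial input. Steps (i) and (iii) are formal: (i) is elementary algebraic number theory, and (iii) is the observation that $\mathrm{SU}$ of a Hermitian module is stable under base field extension, so that $(\mathrm{SU}(W,\Phi))\otimes_{E_0,\sigma_i}\bR = \mathrm{SU}(W\otimes_{E_0,\sigma_i}\bR,\ \Phi\otimes\bR)$, and the right-hand side is $\mathrm{SU}(p_i,q_i)$ by the signature computation. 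I would also remark that this construction is exactly the group-theoretic shadow of the $(V,H,\Lambda)$ data in the remark after Definition \ref{Weil AV}, now with $\bQ(\sqrt{-r})$ replaced by the CM field $E$; the abelian varieties of generalized Weil type in part (a) of the Main Theorem will be built from a lattice in $W$ together with the Hodge structure that the signature data determines.
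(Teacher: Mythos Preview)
Your approach matches the paper's at the structural level: both take $G = \SU(W,\Phi)$ for an $E/E_0$-Hermitian form $\Phi$ on $E^{2n}$ and read off $\SU(p_i,q_i)$ after base change along each $\sigma_i$. The difference is entirely in step (ii). You invoke Landherr's theorem and the Hasse principle for Hermitian forms to assert abstractly that the prescribed archimedean signatures are realizable. The paper instead writes down an explicit diagonal form: by the approximation theorem in $E_0$ one picks $\delta_1,\dots,\delta_d \in E_0$ with $\sigma_i(\delta_j)>0$ iff $i=j$, and then takes the diagonal entries to be products $(-1)^{i-1}\delta_1\cdots\delta_i$ (in the right multiplicities) so that the signature at $\sigma_i$ comes out to $(p_i,q_i)$. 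This is more elementary---no classification theorem for Hermitian forms, only sign-prescription in a number field---and, more to the point for the paper, it makes $\mathrm{disc}(h)$ explicitly computable. That discriminant is used in Lemma~\ref{R type} to check that $(-1)^n\mathrm{disc}(h)$ is a square in $E_0$ under the parity conditions of Corollary/Definition~\ref{Weil}, which is exactly what forces $\bigwedge^n_E U$ to descend to $E_0$. Your abstract existence argument proves the present lemma, but be aware that the explicitness is not cosmetic: with your construction you would later need a separate argument that $\Phi$ can be chosen with $(-1)^n\mathrm{disc}(\Phi)$ a square (easy---adjust a diagonal entry---but it must be said).
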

\begin{proof}
This follows from \cite{M} Proposition $10.14$. For our purposes, we give an explicit construction of $G$ for a chosen imaginary quadratic extension $E_0 \subseteq E$. Without loss of generality, we assume that $p_i \geq p_{i+1}$ (hence $q_i \leq q_{i+1}$). By the approximation theorem (see for example Section $1.2$ of \cite{NT}), there exists an element $\delta_1 \in E_0$ such that $\sigma_1(\delta_1) > 0$ and $\sigma_i(\delta_1) < 0$ for $i >1$. Similarly, we choose $\delta_2, \cdots, \delta_d$ from $E_0$ such that $\sigma_i(\delta_i)$ is positive and $\sigma_i(\delta_j)$ is negative for $i \neq j$. Let us observe here that $(-1)^{i-1} \delta_1 \cdot \delta_2 \cdots \delta_i$ is positive under the first $i$ embeddings $\sigma_1, \sigma_2, \cdots, \sigma_i$, while negative under other embeddings $\sigma_{i+1}, \cdots, \sigma_d$.

Let $U$ be an $E$-vector space of dimension $2n$ together with an $E/E_0$-Hermitian form $h$ defined as follows. In a suitable basis, we define $h$ by a diagonal matrix with $q_1$ copies of $(-1)$'s,  $(q_{i+1} - q_i)$ copies of $[(-1)^{i-1}\delta_1\delta_2\cdots\delta_i]$'s for $1 \leq i \leq (d-1)$ and $p_d$ copies of $1$'s on the diagonal. (In particular, the number of elements is $q_1 + (q_2-q_1) + (q_3 - q_2) + \cdots + (q_d - q_{d-1}) + p_d = p_d + q_d = 2n$.) It follows from the construction that $h \otimes_{E_0, \sigma_i} \bR$ has signature $(p_i, q_i)$.

Let $G = \SU(U, h)$. It is not difficult to see that $G \otimes_{E_0, \sigma_i} \bR \cong \SU(U \otimes_{E_0, \sigma_i} \bR, h \otimes_{E_0, \sigma_i} \bR) \cong \SU(p_i, q_i)$.
\end{proof}

Let $U$, $h$ and $G$ be as in the proof of the previous lemma. Then we have a standard representation $\rho : G \rightarrow \GL(U)$. Now we set $G' = \Res_{E_0/\bQ} G$, $U' = \Res_{E_0/\bQ} U$ (N.B. $\dim_{\bQ} U' = 4nd$) and consider the representation 
\begin{equation} \label{HR}
\rho' = \Res_{E_0/\bQ} (\rho) : G' \rightarrow \GL(U').
\end{equation}
This is the representation that interests us.

\begin{proposition} \label{SR}
The representation $\rho'$ is a Hodge representation corresponding to a Hermitian $\bQ$-VHS of abelian variety type over $\calH = \prod_{i=1}^d \calD_i$ and gives a family of abelian varieties over $\calH$. 
\end{proposition}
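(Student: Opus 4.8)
The plan is to verify conditions (1)--(3) above by exhibiting the Hodge-theoretic data that makes $\rho'$ a Hodge representation of abelian variety type, in the sense of Deligne and Milne (\cite{M}, Chapter~10): a reductive $\bQ$-group $M\subseteq\GL(U')$, a morphism $h\colon\bS\to M_\bR$ defining a polarizable weight-one $\bQ$-Hodge structure on $U'$ through which $\rho'$ factors with $\rho'(G')=M_{\mathrm{der}}$, and a check that $\bar h\colon\bS/\Gm\to M_{\mathrm{ad},\bR}=\bar G'_\bR$ is conjugate to the base point $\varphi$ of $\calH$. Everything decomposes along the $d$ real places of $E_0$: applying Lemma~\ref{decomposition} to $E_0\subseteq\End(U')$ (with $k=\bR$) gives $U'_\bR=\bigoplus_{i=1}^d U_{i,\bR}$, where $U_{i,\bR}:=U\otimes_{E_0,\sigma_i}\bR$ is a complex vector space of dimension $2n$ (its $\bC$-structure induced by $E\otimes_{E_0,\sigma_i}\bR\cong\bC$) carrying the Hermitian form $h_i:=h\otimes_{E_0,\sigma_i}\bR$ of signature $(p_i,q_i)$; correspondingly $G'_\bR=\prod_{i=1}^d\SU(p_i,q_i)$ and $\rho'_\bR$ is the block-diagonal sum of the standard representations of the $\SU(p_i,q_i)$ on the $U_{i,\bR}$. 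Since the summands $U_{i,\bR}$ are pairwise orthogonal for any form built from $h$, it suffices to treat one factor at a time.

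On the $i$-th factor I would install the standard unitary weight-one Hodge structure. Fix an $h_i$-orthogonal splitting $U_{i,\bR}=P_i\oplus N_i$ into a positive-definite and a negative-definite complex subspace, of complex dimensions $p_i$ and $q_i$; since $E$ acts on $U_{i,\bR}$ through its $\bC$-structure, $P_i$ and $N_i$ are $E$-stable. For the appropriate choice of signs, let the complex structure $J_i$ on the real space underlying $U_{i,\bR}$ be the $E$-multiplication by $\sqrt{-1}$ on $P_i$ and its negative on $N_i$; this determines a weight-one Hodge structure $h_i\colon\bS\to\GL(U_{i,\bR})$, with (say) $W_i^{1,0}=P_i^{+}\oplus N_i^{-}$ in the eigenspace decomposition $U_{i,\bR}\otimes_\bR\bC=U_{i,\bR}^{+}\oplus U_{i,\bR}^{-}$ of the $E$-action, and one checks that $h_i$ takes values in $\mathrm{GU}(p_i,q_i)_\bR$ with similitude factor $z\bar z$. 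Assembling $h=(h_i)_i$ yields $h\colon\bS\to M_\bR$, where $M$ is the $\bQ$-subgroup of $\Res_{E_0/\bQ}\mathrm{GU}(U,h)$ of elements whose similitude factor lies in $\Gm\subseteq\Res_{E_0/\bQ}\Gm$ (the generic Mumford--Tate group of the variation); then $\rho'$ factors through $M$, and since $(\Res_{E_0/\bQ}\mathrm{GU}(U,h))_{\mathrm{der}}=\Res_{E_0/\bQ}\SU(U,h)=\rho'(G')$ we get $M_{\mathrm{der}}=\rho'(G')$, while $M_{\mathrm{ad},\bR}=\prod_i\mathrm{PSU}(p_i,q_i)$. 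Finally each $\bar h_i$ is by construction the base point of $\calD_i=\SU(p_i,q_i)/\mathrm S(\mathrm U(p_i)\times\mathrm U(q_i))$, so $\bar h$ lies in the conjugacy class of $\varphi$ and the associated domain is $\prod_{i=1}^d\calD_i=\calH$; this settles (1)--(3) modulo the existence of a compatible polarization.

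For the polarization I would exploit that $E$ is CM over the totally real $E_0$: write $E=E_0(\sqrt{-\delta})$ with $\delta\in E_0$ totally positive and set $\xi=\sqrt{-\delta}$, so that $\bar\xi=-\xi$ and $\sigma_i(\xi)$ is a positive multiple of $\sqrt{-1}$ for every $i$. Put $\psi(x,y)=\mathrm{Tr}_{E/\bQ}\bigl(\xi\,h(x,y)\bigr)$ on $U'$. Then $\psi$ is $\bQ$-bilinear and alternating---alternating because $h(y,x)=\overline{h(x,y)}$, $\bar\xi=-\xi$ and $\mathrm{Tr}_{E/\bQ}$ is invariant under complex conjugation---and $G'$-invariant because $\SU(U,h)$ preserves $h$. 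On the $i$-th factor $\psi$ restricts to a positive multiple of $-\operatorname{Im}(h_i)$, and unwinding the definition of $J_i$ one finds $\psi(x,J_ix)$ to be a positive multiple of $h_i(p,p)-h_i(n,n)$ for $x=p+n$, hence strictly positive when $x\neq0$; the identity $\psi\bigl(h_i(z)\,\cdot\,,h_i(z)\,\cdot\,\bigr)=z\bar z\,\psi(\cdot,\cdot)$ is immediate. Thus $\psi$ polarizes $(U',h)$ and $\rho'$ is a Hodge representation of abelian variety type over $\calH$. Choosing an $\calO_E$-lattice $\Lambda\subset U$---a $\bZ$-lattice in $U'$ stable under $\calO_E$---supplies an integral structure, so by the Riemann bilinear relations the resulting weight-one polarized $\bQ$-VHS over $\calH$ is $R^1\pi_*\bQ$ for the family $\pi\colon\calA\to\calH$ whose fibre over $x$ is $(U'_\bR,J_x)/\Lambda$ polarized by $\psi$.

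The step deserving the most care is the compatibility of $\psi$ with the reference Hodge structure, namely the Riemann positivity $\psi(x,J_ix)>0$: this is precisely what pins down the sign of the purely imaginary generator of $E$ over $E_0$ at each of the $d$ archimedean places, and it is because $E$ is CM over $E_0$---equivalently, because one can take $\xi=\sqrt{-\delta}$ with $\delta$ totally positive---that a single $\psi$ works simultaneously at all $d$ places, independently of the signatures $(p_i,q_i)$. The remaining points---the restriction-of-scalars description of $M$ and the identification of $\bar h$ with $\varphi$---are routine once the bookkeeping already set up in Lemma~\ref{G_1} is in hand.
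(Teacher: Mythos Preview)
Your argument is correct, but it takes a different route from the paper's. The paper does not construct the Shimura datum by hand; instead it invokes the Satake--Deligne classification of symplectic Hodge representations (Theorem~10.18 and Summary~10.11 of \cite{M}) and simply checks the highest-weight conditions: decomposing $U'_\bC$ via Lemma~\ref{decomposition}, each irreducible summand is a $U_{i,\pm}$ with highest weight $\varpi_1$ or $\varpi_{2n-1}$ for exactly one factor $G_{i,\bC}=\SL(2n,\bC)$, which is what the classification requires. The existence of a polarization and of the map $h$ then come for free from the cited theorems.

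Your approach is the explicit PEL construction---essentially Shimura's original one, which the paper itself references (\cite{Sabelianvar}, \cite{BLabelianvar} \S9.6) but deliberately bypasses in favor of the representation-theoretic criterion. What you gain is concreteness: the polarization $\psi=\mathrm{Tr}_{E/\bQ}(\xi\,h(\cdot,\cdot))$ and the complex structure $J_i$ are written down, and the Riemann positivity is verified directly. What the paper's approach gains is brevity and uniformity with the later CY analysis, which also proceeds via highest weights. One small caveat: your parenthetical identification of $M$ as \emph{the} generic Mumford--Tate group is asserted rather than proved (it requires knowing there are no extra Hodge tensors), but this plays no role in verifying conditions (1)--(3), so the argument stands without it.
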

\begin{proof}
Let $W$ be an irreducible summand of $U'_{\bC} := U' \otimes_{\bQ} \bC$. In general, we have $W \cong \bigotimes_{\sigma_i \in I} W_i$ where $W_i$ is an irreducible representation of $G_{i,\bC} = G_{i,\bR} \otimes_{\bR} \bC = \SL(2n,\bC)$. By Theorem $10.18$ and Summary $10.11$ of \cite{M}, it suffices to check the following conditions to show that the $\bQ$-representation $\rho'$ gives a Hermitian $\bQ$-VHS of abelian variety type over $\calH$.
\begin{itemize}
\item[(1)] For each irreducible summand $W \cong \bigotimes_{\sigma_i \in I} W_i$ of $U'_{\bC}$, there exists at most one $i$ such that $W_i$ is a non-trivial representation of $G_{i,\bC}$.
\item[(2)] Let $W_i$ be the non-trivial factor of $W \cong \bigotimes_{\sigma_i \in I} W_i$. If $\calD_i$ is $(A_{2n-1}, \alpha_{p_i})$ with $2 \leq p_i \leq 2n-2$, then $W_i$ has highest weight $\varpi_1$ or $\varpi_{2n-1}$. If $\calD_i$ is $(A_{2n-1}, \alpha_1)$ or $(A_{2n-1},  \alpha_{2n-1})$ (i.e. $p_i =1$ or $2n-1$), then $W_i$ can have arbitrary highest weight.
\end{itemize}

We now verify these conditions by analyzing the irreducible summands of $U'_{\bC}$. By Lemma \ref{decomposition}, the space $U'_{\bR} := U' \otimes_{\bQ} \bR$ decomposes into eigenspaces of the $E_0$-action. Specifically,  we have $U'_{\bR} = \bigoplus_{i=1}^d U_{i,\bR}$ with $U_{i,\bR} = U \otimes_{E_0, \sigma_i} \bR$. By the construction we know that $U_{i,\bR}$ is the standard representation of $G_{i,\bR} = \SU(p_i, q_i)$. It is then not difficult to see that $U_{i, \bR}$ is of complex type, i.e. $U_{i, \bR} \otimes_{\bR} \bC \cong U_{i,+} \oplus U_{i,-}$, where $U_{i,+}$ (resp. $U_{i,-}$) is the irreducible representation of $\SL(2n, \bC)$ with highest weight $\varpi_1$ (resp. $\varpi_{2n-1}$). Since $U_{i,+}$ and $U_{i,-}$ ($1 \leq i\leq d$) are all the irreducible summands of $U'_{\bC}$, Conditions $(1)$ and $(2)$ hold.

Finally, this Hermitian $\bQ$-VHS of abelian variety type gives a family of abelian varieties up to choices of underlying integral structures. (c.f. Theorem $11.8$ of op. cit.)
\end{proof}

We now define a family of abelian varieties of generalized Weil type. Using this, the first part of the Main Theorem will be clear. 

\begin{cordef} \label{Weil}
Let $\pi: \calA \rightarrow \calH = \prod^d_{i=1} \calD_i$ be a family of abelian varieties. We say $\pi$ is of generalized Weil type if the natural $\bQ$-VHS $R^1\pi_*\bQ$ associated to $\pi$ corresponds to the Hodge representation $\rho'$ defined in (\ref{HR}), and the domains $\calD_i = (A_{p_i +q_i -1}, \alpha_{p_i})$ satisfies the following conditions:
\begin{itemize}
\item[(1)] $p_1 = q_1 = n$;
\item[(2)] For $2 \leq i \leq d$, $ 0 < p_i < q_i$, $p_i + q_i = 2n$ and $p_i \equiv n$ (mod $2$).
\end{itemize}
\end{cordef}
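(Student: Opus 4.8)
The definitional half of the statement requires no argument; its mathematical content is the implicit corollary, namely that under the hypotheses of the Main Theorem such a family exists and has generic endomorphism algebra containing $E$ --- in other words, Part (a) of the Main Theorem. The plan is to feed the data $(E,E_0)$ and the domains $\calD_i$ dictated by $\calH$ into the constructions of this section and then read off the endomorphisms.

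First I would match up the combinatorics: put $p_1=q_1=n$ and, for $2\le i\le d$, let $p_i$ be the index of the special root of the $i$-th factor of $\calH$ and $q_i=2n-p_i$. The assumptions $p_i\equiv n\pmod 2$ and $p_i<n$ of the Main Theorem then read $p_i\equiv n\pmod 2$ and $p_i<q_i$, so conditions (1) and (2) of Corollary/Definition \ref{Weil} are met; after reordering the factors so that $p_i\ge p_{i+1}$ we are in the setting of Lemma \ref{G_1}. Since $E$ is CM over $E_0$, the extension $E/E_0$ is imaginary quadratic, so the explicit construction in the proof of Lemma \ref{G_1} applies to this particular $E$ and produces a $2n$-dimensional $E$-vector space $U$, an $E/E_0$-Hermitian form $h$ with $h\otimes_{E_0,\sigma_i}\bR$ of signature $(p_i,q_i)$, and the group $G=\SU(U,h)$ over $E_0$.

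Next I would pass to $G'=\Res_{E_0/\bQ}G$, $U'=\Res_{E_0/\bQ}U$ and $\rho'=\Res_{E_0/\bQ}(\rho)$ as in \eqref{HR} and invoke Proposition \ref{SR}: $\rho'$ is a Hodge representation defining a Hermitian $\bQ$-VHS of abelian variety type over $\calH$, and choosing an $E$-stable lattice in $U'$ yields a family $\pi\colon\calA\to\calH$ with $R^1\pi_*\bQ$ corresponding to $\rho'$, which is therefore of generalized Weil type by definition. (The weight constraints used inside the proof of Proposition \ref{SR} hold automatically here, because the irreducible summands of $U'_\bC$ are precisely the $U_{i,\pm}$ of highest weight $\varpi_1$ or $\varpi_{2n-1}$; the generalized Weil conditions on the $\calD_i$ play no role until the construction of the Calabi--Yau sub-variation in Section \ref{CY}.)

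For the endomorphism algebra, since $h$ is $E/E_0$-Hermitian the group $\SU(U,h)$ acts $E$-linearly, so multiplication by $E$ on $U'=\Res_{E_0/\bQ}U$ centralizes $\rho'(G')=M_{\mathrm{der}}$. A short computation identifies the commutant $\End_{\rho'(G')}(U')$ with $E$, so the centralizer of $\rho'(G')$ in $\GL(U')$ is the abelian group $\Res_{E/\bQ}\Gm$; consequently the connected center $Z(M)^\circ$ of the generic Mumford--Tate group $M$ lies in $\Res_{E/\bQ}\Gm$ and centralizes $E$, and since $M_{\mathrm{der}}$ does too, so does $M=M_{\mathrm{der}}\cdot Z(M)^\circ$. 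Hence $E$ sits inside the generic endomorphism algebra of $\pi$, giving Part (a). I do not anticipate any single hard step --- the substance is already in Lemma \ref{decomposition}, Lemma \ref{G_1} and Proposition \ref{SR} --- but the point needing real care is this last one: making sure the tautological $E$-action consists of morphisms of $\bQ$-VHS, equivalently of endomorphisms of the abelian fibres compatible with the polarization (with complex conjugation on $E$ as the Rosati involution), which is exactly the $E$-analogue of the bookkeeping carried out in $5.2$--$5.3$ of \cite{vG}.
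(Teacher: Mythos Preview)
Your proposal is correct and follows the paper's approach: the paper gives no explicit proof here, simply remarking that ``using this, the first part of the Main Theorem will be clear,'' since existence is Proposition \ref{SR} and the $E$-action is built into the construction of $(U,h)$. Your write-up fleshes out exactly what the paper leaves implicit, including the endomorphism-algebra computation $\End_{\rho'(G')}(U')\cong E$, which the paper does not spell out.
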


\begin{remark}
Later we will see that Condition $(1)$ is needed to obtain a VHS of CY $n$-fold type over $\calD_1$. For $2 \leq i \leq d$, the requirement that $p_i < q_i$ (and hence $p_i < n$) is made to have a VHS over $\calD_i$ of smaller Hodge level. The condition that $p_i \equiv n$ (mod $2$) is the most natural one for us to put so that certain representations on $E$-vector spaces are actually defined over $E_0$ (see Lemma \ref{R type}) which is crucial to our construction. 
\end{remark}

\begin{remark}
Let $\pi$ be a family of abelian varieties of generalized Weil type, then $\bigwedge^n_{\bQ} \rho': G' \rightarrow \GL(\bigwedge^n_{\bQ} U')$ corresponds to the natural VHS associated to $\pi$ with local system $R^n\pi_*\bQ$.
\end{remark}

\section{Constructions of Hermitian VHS of CY type} \label{CY}
We will complete the proof of the Main Theorem in this section. As in the previous sections, $E$ is a CM field with a totally real subfield $E_0$. The Hodge representation $\rho'$ in \eqref{HR} gives a family of abelian varieties of generalized Weil type $\pi : \calA \rightarrow \calH$. Other notations also remain as in Section \ref{AV}. 

The basic idea is to consider the $G$-representation $\bigwedge^n_{E} U$, which is essentially of CY type, but defined over $E$. Before proving the Main Theorem, we show in Corollary \ref{sub-rep} that $\Res_{E/\bQ} \bigwedge^n_{E} U$ is naturally a sub-representation of $\bigwedge^n_{\bQ} U'$, and in Lemma \ref{R type} that the representation $\bigwedge^n_{E} U$ is defined over $E_0$. 

To start, we prove the following lemma in a slightly general situation.

\begin{lemma} \label{res}
Let $L \subseteq F \subseteq K$ be finite field extensions, and $W$ be a finite dimensional $K$-vector space. Also choose a positive integer $l$ with $1 \leq l \leq \dim_K W$.  
\begin{itemize}
\item[(1)] There exists a natural injection between $F$-vector spaces: $\Res_{K/F} (\bigwedge^l_{K} W) \hookrightarrow \bigwedge^l_{F} (\Res_{K/F} W)$. 
\item[(2)] There is a canonical isomorphism between $\Res_{F/L} (\Res_{K/F} W)$ and $\Res_{K/L} W$.
\end{itemize}
\end{lemma}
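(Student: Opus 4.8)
The plan is to establish each part by constructing the relevant natural map at the level of multilinear algebra and then checking it has the asserted property.

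For part (1), first I would recall that for a tower of finite field extensions $F \subseteq K$ and a $K$-vector space $W$, the restriction of scalars $\Res_{K/F} W$ is just $W$ viewed as an $F$-vector space, of dimension $[K:F] \cdot \dim_K W$. There is an evident $F$-multilinear, alternating map from $(\Res_{K/F}W)^l$ to $\Res_{K/F}(\bigwedge^l_K W)$ sending $(w_1,\dots,w_l)$ to $w_1 \wedge_K \cdots \wedge_K w_l$; by the universal property of the exterior power over $F$ this induces an $F$-linear map $\Phi\colon \bigwedge^l_F(\Res_{K/F}W) \to \Res_{K/F}(\bigwedge^l_K W)$. This $\Phi$ is visibly surjective (any $w_1 \wedge_K \cdots \wedge_K w_l$ is hit), so the dual map $\Phi^\vee$ is injective, and dualizing back — or, better, directly exhibiting a section — gives the desired injection $\iota\colon \Res_{K/F}(\bigwedge^l_K W) \hookrightarrow \bigwedge^l_F(\Res_{K/F} W)$. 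Concretely, I would construct $\iota$ by choosing an $F$-basis $e_1,\dots,e_m$ of $K$ with $e_1 = 1$ and, for a $K$-basis $f_1,\dots,f_r$ of $W$, sending the $K$-exterior monomial $f_{j_1}\wedge_K \cdots \wedge_K f_{j_l}$ (with $j_1 < \cdots < j_l$) to the $F$-exterior monomial $f_{j_1}\wedge_F \cdots \wedge_F f_{j_l}$, extending $K$-linearly on the source using the $F$-module structure on the target coming from $K = \bigoplus F e_i$ acting through the first factor; one then checks this is well-defined (independent of basis) because it is the transpose of the canonical surjection, or by a direct naturality argument. The content is just that $\bigwedge^l_K W$ is an $F$-subspace of $\bigwedge^l_F W$ cut out by the relations forcing $K$-multilinearity, and these relations are $F$-linear.

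For part (2), the statement $\Res_{F/L}(\Res_{K/F} W) \cong \Res_{K/L} W$ is essentially tautological: all three spaces have the same underlying abelian group, namely $W$, and the $L$-vector space structures agree because the $L$-action on $\Res_{F/L}(\Res_{K/F}W)$ is obtained by restricting the $F$-action (itself the restriction of the $K$-action) along $L \hookrightarrow F$, which is the same as restricting the $K$-action directly along $L \hookrightarrow K$ since $L \hookrightarrow K$ factors as $L \hookrightarrow F \hookrightarrow K$. I would just write down the identity map on $W$ and observe it is $L$-linear with $L$-linear inverse, i.e. transitivity of restriction of scalars.

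I do not expect a serious obstacle here; the only mild subtlety is making the map in (1) canonical (basis-independent) rather than merely existent, and for that the cleanest route is the transpose-of-the-canonical-surjection description together with the remark that over a field every surjection splits, so no naturality of the splitting is needed — only naturality of $\Phi$, which is immediate from the universal property of $\bigwedge^l_F$. If one wants the injection itself to be functorial in $W$, I would instead define it via the composite $\bigwedge^l_K W \hookrightarrow \bigwedge^l_K W \otimes_F (\text{something})$ or simply note that for the application in the paper only the existence of \emph{some} $G$-equivariant injection is used, and $G$-equivariance of both $\Phi$ and any basis-free construction follows because $G$ acts $K$-linearly on $W$.
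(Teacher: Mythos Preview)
Your treatment of part~(2) matches the paper's exactly: the identity on the underlying set is $L$-linear, and the dimensions agree.

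For part~(1), you correctly identify the canonical surjection $\Phi\colon \bigwedge^l_F(\Res_{K/F}W) \twoheadrightarrow \Res_{K/F}(\bigwedge^l_K W)$, and this is also where the paper begins. But none of your proposed ways of turning this into an injection in the other direction produce a \emph{natural} map, which is what the lemma asserts and what the subsequent $G$-equivariance lemma relies on. ``Dualizing back'' after forming $\Phi^\vee$ just recovers $\Phi$ via the double-dual; your explicit basis construction is injective but manifestly basis-dependent (and the parenthetical ``$\bigwedge^l_K W$ is an $F$-subspace of $\bigwedge^l_F W$ cut out by the relations forcing $K$-multilinearity'' is backwards---those relations cut out a quotient, not a subspace); and appealing to a splitting, even a $G$-equivariant one via semisimplicity, yields \emph{some} section but not a canonical one, so the next lemma's proof (``check each map in the chain commutes with the $H$-action'') would not go through as written.

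The ingredient you are missing is the trace pairing. For any finite-dimensional $K$-space $M$, the assignment $f \mapsto \mathrm{Tr}_{K/F}\circ f$ gives a natural $F$-linear isomorphism $\Res_{K/F}\Hom_K(M,K) \xrightarrow{\ \sim\ } \Hom_F(\Res_{K/F}M, F)$. The paper then applies your surjection $\Phi$ with $W^*$ in place of $W$, takes $F$-duals to obtain an honest injection
\[
\Hom_F\bigl(\Res_{K/F}\textstyle\bigwedge^l_K W^*,\, F\bigr) \hookrightarrow \Hom_F\bigl(\textstyle\bigwedge^l_F \Res_{K/F} W^*,\, F\bigr),
\]
and unwinds both sides using the trace isomorphism together with $(\bigwedge^l N)^* \cong \bigwedge^l N^*$ to arrive at the desired natural injection $\Res_{K/F}\bigwedge^l_K W \hookrightarrow \bigwedge^l_F \Res_{K/F} W$. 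Every step in this chain is functorial in $W$ and commutes with $K$-linear automorphisms, which is precisely what the paper exploits when checking $G$-equivariance.
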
	

\begin{proof} 
We first prove Part $(1)$. First of all, for any finite dimensional $K$-vector space $M$ there is a natural isomorphism $\Res_{K/F} \Hom_K(M, K) \rightarrow \Hom_F(\Res_{K/F} M, F)$ given by $f \mapsto \mathrm{Tr} \circ f$, where $\mathrm{Tr} : K \rightarrow F$ is defined by $\mathrm{Tr}(z) = \sum_{g \in \mathrm{Gal}(K/F)} g(z)$. To see this, note firstly that  $\mathrm{Tr} : K \rightarrow F$ is $F$-linear, so the map $f \mapsto \mathrm{Tr} \circ f$ is well-defined and $F$-linear. Next, this is an injective map. Indeed, if $\mathrm{Tr} \circ f \equiv 0$ for a $K$-linear map $f: M \rightarrow K$, and if there exists $v \in M$, such that $f(v) \neq 0$, then for any $c \in K$, we have $\mathrm{Tr}(c) = \mathrm{Tr} (f(\frac{c}{f(v)} \cdot v)) = 0$ which is a contradiction. Now the claim follows from an easy dimension count. 

Secondly, one can identify $\bigwedge_k^l N^*$ with $(\bigwedge_k^l N)^*$ for any field $k$ and $k$-vector space $N$, where $N^* = \Hom_k(N,k)$ and similarly for $(\bigwedge_k^l N)^*$. 

Furthermore, we observe that for any finite dimensional $K$-vector space $M$, the natural map $\prod^l \Res_{K/F} M \twoheadrightarrow \Res_{K/F} \bigwedge^l_{K} M$ induces a surjection $\bigwedge^l_{F} \Res_{K/F} M \twoheadrightarrow \Res_{K/F} \bigwedge^l_{K} M$. In particular, we have the following natural surjection  
$$\bigwedge \limits^l_F \Res_{K/F} W^* \twoheadrightarrow \Res_{K/F} \bigwedge \limits^l_KW^*,$$
where $W^* = \Hom_K(W,K)$. Taking duals, we obtain a natural inclusion 
$$\Hom_F(\Res_{K/F} \bigwedge \limits^l_K W^*, F) \hookrightarrow \Hom_F(\bigwedge \limits^l_F \Res_{K/F} W^*, F).$$

Let us denote $\Res_{K/F}$ by $\Res$. Using the above observations, we get a chain of injective maps as follows.
$$\Res \bigwedge \limits^l_K W \cong \Res \bigwedge \limits^l_K \Hom_K(W^*, K) \cong \Res \Hom_K(\bigwedge \limits^l_K W^*, K) \cong \Hom_F(\Res\bigwedge \limits^l_K W^* , F) $$
$$ \hookrightarrow \Hom_F(\bigwedge \limits^l_F \Res W^*, F) \cong \bigwedge \limits^l_F \Hom_F(\Res W^*, F) \cong \bigwedge \limits^l_F \Res \Hom_K(W^*, K) \cong \bigwedge \limits^l_F \Res W.$$

As for Part $(2)$, the isomorphism is given by the identity map. It is clear that the map is $L$-linear and both vector spaces have the same $L$-dimension.

\end{proof}

Moreover, when $W$ is a representation of some algebraic group $H$, we note the following lemma.
\begin{lemma}
Notations as in the previous lemma. Suppose $W$ is a $K$-representation of an algebraic group $H$ defined over $F$, then the natural inclusion described in $(1)$ (resp. the natural isomorphism in $(2)$) of Lemma \ref{res} commutes with the action of $H$ (resp. of $\Res_{F/L} H$).
\end{lemma}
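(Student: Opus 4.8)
The plan is to exploit the fact that the maps in Lemma~\ref{res} were assembled entirely from operations that are functorial in the input $K$-vector space — restriction of scalars, exterior powers, $\mathrm{Hom}$ into the ground field, and the trace pairing — so that each of them automatically transports group actions. The proof then consists of recording the $H$-action on each space that occurs in the construction and checking equivariance one building block at a time. First I would fix conventions: a $K$-representation of $H$ is a homomorphism $H_K \to \GL(W)$ over $K$, and $\Res_{K/F}W$ is made into an $H$-representation through the composite $H \to \Res_{K/F}H_K \to \Res_{K/F}\GL(W) \hookrightarrow \GL(\Res_{K/F}W)$, the first arrow being the unit of adjunction. The spaces $\bigwedge^l_F(\Res_{K/F}W)$, $\Res_{K/F}\bigl(\bigwedge^l_K W\bigr)$, $\Hom_K(W,K)$, $\Hom_F(\Res_{K/F}W,F)$, and so on, then inherit $H$-actions from this one together with the usual contragredient and diagonal conventions.

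Next I would verify equivariance of the three elementary maps used in the proof of Lemma~\ref{res}(1). (a) The isomorphism $\Res_{K/F}\Hom_K(M,K) \xrightarrow{\ \sim\ } \Hom_F(\Res_{K/F}M,F)$, $f \mapsto \mathrm{Tr}\circ f$: since $\mathrm{Tr}\colon K \to F$ is a fixed $F$-linear map not involving $H$, one has $\mathrm{Tr}\circ(h\cdot f) = \mathrm{Tr}\circ f\circ h^{-1} = h\cdot(\mathrm{Tr}\circ f)$, so it intertwines the contragredient actions. (b) The identification $\bigwedge^l_k N^* \cong \bigl(\bigwedge^l_k N\bigr)^*$ is the standard natural pairing, equivariant for the contragredient action on each side. (c) The surjection $\bigwedge^l_F \Res_{K/F}M \twoheadrightarrow \Res_{K/F}\bigwedge^l_K M$, $m_1\wedge\cdots\wedge m_l \mapsto m_1\wedge_K\cdots\wedge_K m_l$, is $H$-equivariant because $H$ acts diagonally on both sides. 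Moreover, dualizing an $H$-equivariant surjection of finite-dimensional $F$-vector spaces yields an $H$-equivariant injection of the contragredients, which takes care of the dualization step in the proof. Composing, the chain of injections displayed in the proof of Lemma~\ref{res}(1) is a chain of $H$-equivariant maps, which is the first assertion.

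For Lemma~\ref{res}(2), the isomorphism $\Res_{F/L}(\Res_{K/F}W) \to \Res_{K/L}W$ is the identity on underlying $L$-vector spaces, and the two $\Res_{F/L}H$-actions in play are built from the same data via the composites $\Res_{F/L}H \to \Res_{F/L}\Res_{K/F}H_K \to \cdots$ and $\Res_{F/L}H \to \Res_{K/L}H_K \to \cdots$. These agree because restriction of scalars is transitive, $\Res_{F/L}\circ\Res_{K/F} = \Res_{K/L}$, compatibly with the units of adjunction; hence the identity map is $\Res_{F/L}H$-equivariant.

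The only point requiring genuine care — and the one I would spell out explicitly — is the bookkeeping: making sure that the $H$-action placed on each auxiliary space, particularly the two presentations of the exterior power and the $F$-linear versus $K$-linear duals, is precisely the one induced from $H_K \to \GL(W)$ through the functorial operations, so that ``natural'' really does entail ``equivariant'' at every arrow. There is no substantive obstacle beyond this; the content lies in choosing mutually compatible conventions and then invoking functoriality.
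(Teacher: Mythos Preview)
Your proposal is correct and follows essentially the same approach as the paper: both argue that each map in the chain from Lemma~\ref{res}(1) is $H$-equivariant (the paper singles out the trace map $f\mapsto \mathrm{Tr}\circ f$ as its representative example, exactly as you do in your step~(a)), and both declare Part~(2) to be immediate. You are in fact more thorough than the paper, which leaves the other maps and the passage to $A$-points to the reader, whereas you spell out the equivariance of (b), (c), and the dualization step as well.
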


\begin{proof}
For $(1)$, one can check that each linear map in the chain commutes with the $H$-action. For example, let us check this for the map $\phi: \Res_{K/F} \Hom_K(M, K) \rightarrow \Hom_F(\Res_{K/F} M, F)$, $f \mapsto \mathrm{Tr} \circ f$, where $M$ is a $K$-representation of $H$. Indeed, for any $g \in H$, $(\phi(g \cdot f))(v) = (\mathrm{Tr} \circ ( g \cdot f))(v) = \mathrm{Tr} ((g \cdot f)(v)) = \mathrm{Tr}(f (g^{-1} \cdot v)) = \phi(f)(g^{-1} \cdot v) = (g \cdot \phi( f ))(v)$. It is not difficult to also check that $\phi \otimes_F {A}$ commutes with the action of $H(A)$ for any $F$-algebra $A$. Part $(2)$ is clear. 
\end{proof}

Inspired by Proposition $3.1$ of \cite{Lom}, we observe that in certain situations $\bigwedge^l_K W$ can be described as kernel of some endomorphism of $\bigwedge^l_F (\Res_{K/F} W)$ quite explicitly.

\begin{example} 
Let $F = \bQ$ and $K = \bQ(\sqrt{-r})$ for some positive integer $r$, and choose an element $\varphi \in K$ such that $\varphi^2 = -r$. Also suppose that $\dim_K W = 6$. By writing down the chain of injective maps in Lemma \ref{res} Part (a) (the composition will be denoted by $i$) explicitly, one can show that the image of $ v_1 \wedge_{K} v_2 \wedge_{K} v_3 \in \bigwedge^3_K W$ is $\frac{1}{4} [v_1 \wedge v_2 \wedge v_3+\frac{1}{\varphi^2} \cdot (\varphi v_1 \wedge \varphi v_2 \wedge v_3 + \varphi v_1 \wedge  v_2 \wedge \varphi v_3 + v_1 \wedge \varphi v_2 \wedge \varphi v_3)]$, where $\wedge$ means $\wedge_\bQ$. Now we define $\varphi_3: \bigwedge^3_{\bQ}(\Res_{K/\bQ} W) \rightarrow \bigwedge^3_{\bQ} (\Res_{K/\bQ} W)$ to be the map $v_1 \wedge v_2 \wedge v_3 \mapsto (\varphi v_1 \wedge \varphi v_2 \wedge v_3 + \varphi v_1 \wedge  v_2 \wedge \varphi v_3 + v_1 \wedge \varphi v_2 \wedge \varphi v_3)$. The map $\varphi_3$ is well-defined and it is straightforward to check that $i(\bigwedge^3_K W) = \ker(\varphi_3 - 3\varphi^2 \cdot \id)$. More generally, for $W$ with $\dim_K W = 2m$ and $v_1, \cdots, v_m \in W$,  we pick up two elements from $\{v_1, \cdots, v_m\}$ and multiply them by $\varphi$ in the expression $v_1 \wedge \cdots \wedge v_m$. The sum of such ${m \choose 2}$ elements of $\bigwedge^m_{\bQ}(\Res_{K/\bQ} W)$ are defined to be $\varphi_m(v_1 \wedge \cdots \wedge v_m)$. In a similar way one can show that $i(\bigwedge^m_K W) = \ker(\varphi_m - \binom{m}{2} \cdot \varphi^2 \cdot \id)$.
\end{example}

Now let us return to the Hodge representation $\rho'$ in \eqref{HR}.
\begin{corollary} \label{sub-rep}
With notations as in Section \ref{AV}, $\Res_{E/\bQ} (\bigwedge^n_E U)$ is naturally a $G'$-subrepresentation of $\bigwedge^n_{\bQ}  U'$.
\end{corollary}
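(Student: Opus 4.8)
The plan is to reduce Corollary \ref{sub-rep} to Lemma \ref{res} by setting up the right tower of fields. Here $E$ is a CM field with totally real subfield $E_0$, and $U$ is the $2n$-dimensional $E$-vector space carrying the standard representation of $G = \SU(U,h)$. I would take $L = \bQ$, $F = E_0$, $K = E$ and $W = U$ in Lemma \ref{res}, and $l = n$ (which satisfies $1 \le n \le 2n = \dim_E U$). Part (1) of that lemma then yields a natural injection of $E_0$-vector spaces
\[
\Res_{E/E_0}\!\left(\textstyle\bigwedge^n_E U\right) \hookrightarrow \textstyle\bigwedge^n_{E_0}\!\left(\Res_{E/E_0} U\right),
\]
and the subsequent lemma tells us this injection is equivariant for the action of $G$ (which is defined over $E_0$, by Lemma \ref{G_1}).

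Next I would apply $\Res_{E_0/\bQ}$ to this inclusion. Restriction of scalars is an exact functor on vector spaces (it just forgets structure), so it preserves injections; and it carries $G$-equivariant maps to $G' = \Res_{E_0/\bQ} G$-equivariant maps. This gives a $G'$-equivariant injection
\[
\Res_{E_0/\bQ}\Res_{E/E_0}\!\left(\textstyle\bigwedge^n_E U\right) \hookrightarrow \Res_{E_0/\bQ}\textstyle\bigwedge^n_{E_0}\!\left(\Res_{E/E_0} U\right).
\]
I then identify the two sides with the objects in the statement. For the left side, Part (2) of Lemma \ref{res} (with $L=\bQ$, $F = E_0$, $K = E$, $W = \bigwedge^n_E U$) gives a canonical isomorphism $\Res_{E_0/\bQ}\Res_{E/E_0}(\bigwedge^n_E U) \cong \Res_{E/\bQ}(\bigwedge^n_E U)$, again $G'$-equivariantly by the accompanying lemma. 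For the right side, I need the compatibility of $\bigwedge$ with $\Res$ over a vector space: there is a canonical isomorphism $\Res_{E_0/\bQ}(\bigwedge^n_{E_0} M) \cong \bigwedge^n_{\bQ}(\Res_{E_0/\bQ} M)$ — wait, this is false in general; rather, what is available is again only an \emph{injection} of the former into the latter, exactly as in Lemma \ref{res}(1) applied with $L = \bQ \subseteq F = E_0 = K$... but that degenerate case ($F = K$) makes $\Res_{K/F}$ trivial and does not help. So the correct route is: apply Lemma \ref{res}(1) with $L = \bQ$, $F = \bQ$, $K = E_0$, $W = \Res_{E/E_0} U$, giving an injection $\Res_{E_0/\bQ}\bigwedge^n_{E_0}(\Res_{E/E_0} U) \hookrightarrow \bigwedge^n_{\bQ}\Res_{E_0/\bQ}\Res_{E/E_0}(U) = \bigwedge^n_{\bQ} U'$, using Lemma \ref{res}(2) for the last identification and noting $U' = \Res_{E_0/\bQ} U$ while $\Res_{E_0/\bQ}\Res_{E/E_0} U \cong \Res_{E/\bQ} U$. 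Composing all of these $G'$-equivariant injections yields the desired embedding $\Res_{E/\bQ}(\bigwedge^n_E U) \hookrightarrow \bigwedge^n_{\bQ} U'$.

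The main obstacle, then, is purely bookkeeping: making sure the tower of fields is set up so that each arrow is an instance of Lemma \ref{res} (or its equivariant refinement), and that the various canonical identifications $\Res \circ \Res \cong \Res$ and $U' = \Res_{E_0/\bQ} U$ are threaded consistently. One subtlety worth pinning down is that Lemma \ref{res} as stated assumes the extensions are separable (Galois trace makes sense); since $E_0/\bQ$ and $E/E_0$ are extensions of number fields this is automatic, so no issue arises. No essentially new idea is needed beyond Lemma \ref{res} and the observation that $\Res$ is exact and functorial for equivariant maps; the proof is a short diagram chase, and I would present it as a one-line composition of injections with a sentence justifying each identification.
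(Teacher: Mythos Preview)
Your approach is correct and matches the paper's proof essentially line for line: apply Lemma~\ref{res}(1) and its equivariant refinement first with $F=E_0$, $K=E$, $W=U$, $H=G$, then with $F=\bQ$, $K=E_0$, $W=\Res_{E/E_0}U$, $H=G'$, and thread the identifications $\Res_{E_0/\bQ}\circ\Res_{E/E_0}\cong\Res_{E/\bQ}$ from part~(2). The only cosmetic difference is that the paper states the composition in two sentences without the mid-course correction; your final argument is the same, and your remark on separability is a harmless extra check.
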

\begin{proof}
Applying the previous two lemmas for $l = n$, $L =\bQ$, $F = E_0$, $K =E$, $W = U$ and $H = G$, we get $$\Res_{E/\bQ} (\bigwedge^n_E U) \cong \Res_{E_0/\bQ} (\Res_{E/ E_0} \bigwedge^n_E U)  \hookrightarrow \Res_{E_0/\bQ} (\bigwedge^n_{E_0} \Res_{E/E_0} U).$$ Similarly, when $l = n$, $F = \bQ$, $K =E_0$, $W = \Res_{E/E_0} U$ and $H = G'$, it is not hard to see that there is an injective map $\Res_{E_0/\bQ} (\bigwedge^n_{E_0} \Res_{E/E_0} U) \hookrightarrow \bigwedge^n_{\bQ}  U'$.
\end{proof}	

As we observed, the representation $G' \rightarrow \GL(\bigwedge^n_{\bQ}  U')$ corresponds to the Hermitian VHS over $\calH$ with local system $R^n\pi_* \bQ$. So by the previous corollary the representation $\Res_{E/\bQ} (\bigwedge^n_E U)$ corresponds to a sub-VHS of $R^n\pi_* \bQ$.

We now verify the rationality of the $G$-representation $\bigwedge^n_E U$ following Section $3.5$ of \cite{FL}. Recall that $h$ is the Hermitian form on the $2n$-dimensional $E$-vector space $U$ as constructed in Lemma \ref{G_1}.
\begin{lemma} \label{R type}
The $G$-representation $\bigwedge^n_E U$ is defined over $E_0$. In other words, there exists a sub-representation on an $E_0$-vector space $W_0 \subseteq \Res_{E/E_0} (\bigwedge^n_E U)$, such that $W_0 \otimes_{E_0} E \cong \bigwedge^n_E U$. 
\end{lemma}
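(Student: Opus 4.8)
The plan is to descend the $G$-representation $\bigwedge^n_E U$ from $E$ to $E_0$ by exhibiting an explicit $E_0$-form, exactly as in Section~$3.5$ of \cite{FL}. The starting point is the observation that $G = \SU(U,h)$ is itself defined over $E_0$, so the Galois group $\Gal(E/E_0) = \{1, \tau\}$ (where $\tau$ is complex conjugation) acts on everything in sight. In particular, $\tau$ acts semilinearly on the $E$-vector space $U$; more precisely, the Hermitian form $h : U \times U \to E$ satisfies $h(v,w) = \tau\, h(w,v)$, so $h$ furnishes a $\tau$-semilinear isomorphism $U \xrightarrow{\sim} \bar{U} := U \otimes_{E,\tau} E$ of $G$-representations, where $\bar U$ carries the conjugate $G$-action. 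Equivalently, $\bar U \cong U^\vee$ as $G$-representations (the contragredient), via $h$.

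The key step is then to produce a $\tau$-semilinear $G$-equivariant involution on $\bigwedge^n_E U$ and apply Galois descent. Taking $n$-th exterior powers of the isomorphism $h : U \xrightarrow{\sim} U^\vee$ gives a $G$-isomorphism $\bigwedge^n_E U \xrightarrow{\sim} \bigwedge^n_E U^\vee \cong (\bigwedge^n_E U)^\vee$, and composing with the natural perfect pairing $\bigwedge^n_E U \otimes \bigwedge^{2n-n}_E U = \bigwedge^n_E U \otimes \bigwedge^n_E U \to \bigwedge^{2n}_E U \cong E$ (using $\dim_E U = 2n$) identifies $\bigwedge^n_E U$ with its own dual in a $G$-equivariant way. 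Packaging this with the $\tau$-semilinear structure coming from $h$ yields a $\tau$-semilinear map $\psi : \bigwedge^n_E U \to \bigwedge^n_E U$ commuting with $G$. The point is to check that (after rescaling, if $\psi^2$ is a scalar $c \in E_0^\times$ one needs $c$ to be a norm, or one replaces $\psi$ by $\psi$ composed with a suitable automorphism) one can arrange $\psi^2 = \id$; the sign here is governed by the fact that $n \equiv p_i \pmod 2$ in the definition of generalized Weil type (Corollary/Definition~\ref{Weil}), which is precisely the parity condition flagged in the remark after that definition as ``crucial to our construction''. Once $\psi$ is an honest $\tau$-semilinear involution, its fixed locus $W_0 := (\bigwedge^n_E U)^{\psi}$ is an $E_0$-subspace with $W_0 \otimes_{E_0} E \cong \bigwedge^n_E U$ by Hilbert~90 / classical Galois descent, and since $\psi$ is $G$-equivariant (with $G$ defined over $E_0$), $W_0$ is a $G$-subrepresentation over $E_0$. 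Sitting inside $\Res_{E/E_0}(\bigwedge^n_E U)$ this is exactly the claimed $W_0$.

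The main obstacle I expect is the parity/sign bookkeeping in the previous paragraph: verifying that the natural $\tau$-semilinear map built from $h$ squares to a scalar of the correct sign so that it can be normalized to an involution (rather than, say, an anti-involution, which would instead produce a quaternionic structure and an $E_0$-form only after a further twist). This is where $n \equiv p_i \pmod 2$ enters: the signature data of $h$ under the real places, together with the $(-1)^{\binom{n}{2}}$-type signs arising from reordering wedge factors when one transports $h^{\wedge n}$ through the self-duality pairing, must conspire to give $\psi^2 \in E_0^{>0}$ at every real place, hence $\psi^2$ a norm from $E$, hence normalizable to $\pm\id$. Following \cite{FL} Section~$3.5$, one reduces this to a place-by-place computation over $\bR$ using Lemma~\ref{decomposition} to split $\Res_{E/\bQ}(\bigwedge^n_E U)\otimes \bR$ into the pieces indexed by $I = \Hom(E_0,\bR)$, and on each piece checks the sign against the known real Hodge-theoretic description of $\bigwedge^n$ of the standard representation of $\SU(p_i,q_i)$. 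The remaining steps — equivariance of each map in the chain, the dimension count for descent — are routine.
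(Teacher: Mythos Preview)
Your overall strategy coincides with the paper's: the $\tau$-semilinear operator $\psi$ you build is exactly the operator $\star = \tau_1^{-1}\circ\tau_2$ in the paper's proof, where $\tau_1$ comes from the wedge pairing $\bigwedge^n_E U \times \bigwedge^n_E U \to \bigwedge^{2n}_E U \cong E$ and $\tau_2$ from the induced Hermitian pairing $\wedge^n h$. The paper likewise reduces descent to showing that $\star\circ\star \in E_0^\times$ lies in $\mathrm{Nm}_{E/E_0}(E^\times)$, equivalently that the quaternion algebra $\mathcal{A} = \End_{E_0[G]}(\Res_{E/E_0}\bigwedge^n_E U)$ splits (\cite{FL} Proposition~3.22).

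There is, however, a genuine gap in your verification of this last point. You assert that positivity of $\psi^2$ at every real place of $E_0$ implies $\psi^2$ is a norm from $E$. This implication is false: already for $E_0=\bQ$, $E=\bQ(i)$, the element $3$ is totally positive but is not a norm. In terms of the quaternion algebra, the Hilbert symbols $(-e,\psi^2)_v$ at the archimedean places do not determine those at the finite places where $E/E_0$ ramifies. Your place-by-place Hodge-theoretic check over $\bR$ establishes only that $\bigwedge^n$ of the standard representation of $\SU(p_i,q_i)$ is of real type at each archimedean place --- necessary, but not sufficient, for descent over $E_0$.

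The paper sidesteps any local--global argument by computing $\star\circ\star$ explicitly: one finds $\star\circ\star = (-1)^n\,\mathrm{disc}(h)$, and then, using the diagonal matrix for $h$ constructed in Lemma~\ref{G_1}, reads off
\[
(-1)^n\,\mathrm{disc}(h) \;=\; (-1)^n\cdot(-1)^{q_1}\cdot\prod_{i=1}^{d-1}\bigl[(-1)^{i-1}\delta_1\cdots\delta_i\bigr]^{\,q_{i+1}-q_i}.
\]
The parity hypothesis $p_i\equiv n\pmod 2$ (together with $q_1=n$) forces every exponent $q_{i+1}-q_i$ to be even, so this element is visibly a \emph{square} in $E_0$; and squares in $E_0$ are norms from $E$ since $a^2 = \mathrm{Nm}_{E/E_0}(a)$ for $a\in E_0$. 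This is where the parity condition is actually used --- not for a sign at each real place, but to make the global discriminant a square.
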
 
\begin{proof}
Let us consider $\mathcal{A} := \End_{E_0[G]}(\Res_{E/E_0}(\bigwedge^n_E U))$. By Proposition $3.22$(iv) of \cite{FL} it suffices to show that $\mathcal{A}$ is the matrix algebra $\mathbb{M}_2(E_0)$. We first construct an operator $\star \in \mathcal{A}$. Note that there are two natural pairings
$$ \wedge:  \bigwedge \limits^n_E U \times \bigwedge \limits^n_E U \rightarrow \bigwedge \limits^{2n}_E U \cong E$$
and $$\wedge^n h: \bigwedge \limits^n_E U \times \bigwedge \limits^n_E U \rightarrow  E,$$
where $\wedge^n h$ is defined by $(\wedge^n h)(v_1 \wedge \cdots \wedge v_n, u_1 \wedge \cdots \wedge u_n) = \det(h(v_i, u_j))$. They give an $E$-linear isomorphism $\tau_1: \bigwedge^n_E U \rightarrow \bigwedge^n_E U^*$ and an $E$-conjugate-linear isomorphism $\tau_2: \bigwedge^n_E U \rightarrow \bigwedge^n_E U^*$ respectively. The operator $\star$ is then defined by $\star = \tau_1^{-1} \circ \tau_2$. It is not hard to verify that $\star \in \mathcal{A}$. Moreover, direct computation shows that $\star \circ \star = (-1)^n \mathrm{disc}(h)$, where $\mathrm{disc}(h)$ is the discriminant of $h$ which is defined by $\mathrm{disc}(h) \equiv \det(h) \in E_0^*/\mathrm{Nm}_{E/E_0}(E^*)$.

Suppose $E = E_0(\sqrt{-e})$, then by Proposition $3.22$(i) of op. cit. $\mathcal{A}$ is a quaternion algebra over $E_0$ with a quaternion basis $\mathbf{1} = \Id$, $\mathbf{i} = \sqrt{-e}\Id$, $\mathbf{j} = \star$, $\mathbf{k} = \mathbf{i} \mathbf{j}$. By a standard fact on quaternion algebras (see for example Proposition $1.1.7$ of \cite{Quaternion}) it is now enough to show that there is an element $c$ in $E$ such that  $\star \circ \star = \mathrm{Nm}_{E/E_0}(c)$. Using Lemma \ref{G_1}, we have $\star \circ \star = (-1)^n \mathrm{disc}(h) = (-1)^n \cdot (-1)^n \cdot \delta_1^{q_2-n} \cdot (-\delta_1\delta_2)^{q_3-q_2} \cdots [(-1)^d \delta_1\delta_2 \cdots \delta_{d-1}]^{q_d-q_{d-1}}$. By Corollary/Definition \ref{Weil} $(2)$, $q_{i+1}-q_i$ is even for $1 \leq i \leq d-1$, so $(-1)^n \mathrm{disc}(h)$ is a square in $E_0$, and hence is of the required form. 
\end{proof}

Let us now prove Part (b) of the Main Theorem. The notations are the same as in Section \ref{AV}. For example, $U_{i,\bR} = U \otimes_{E_0,\sigma_i} \bR$.

\begin{proof}[Proof of the Main Theorem]
First note that $\Res_{E_0/\bQ} W_0$ is a sub-representation of $\Res_{E_0/\bQ} (\Res_{E/E_0} \bigwedge^n_{E} U) \cong \Res_{E/\bQ} (\bigwedge^n_E U) \subseteq \bigwedge^n_{\bQ} U'$. We claim that $\Res_{E_0/\bQ} W_0$ corresponds to a $\bQ$-Hermitian VHS of CY type with real multiplication by $E_0$. To see it, let us consider the decomposition $(\Res_{E_0/\bQ} W_0) \otimes_{\bQ} \bR \cong \bigoplus_{i=1}^d (W_0 \otimes_{E_0, \sigma_i} \bR)$. For $i=1$, the representation $W^1_{0, \bR} := W_0 \otimes_{E_0, \sigma_1} \bR$ gives rise to a Hermitian $\bR$-VHS of CY $n$-fold type over $\calD_1 = (A_{2n-1}, \alpha_n)$. Indeed, by Lemma \ref{R type} we have that $W^1_{0, \bR} \otimes_{\bR} \bC \cong (W_0 \otimes_{E_0} E) \otimes_{E_0, \sigma_1} \bR \cong (\bigwedge^n_E U) \otimes_{E_0, \sigma_1} \bR \cong \bigwedge^n_{\bC} U_{1,\bR}$ which has highest weight $\varpi_n$ (note that $U_{1,\bR}$ is isomorphic to the standard representation of $\SL(2n, \bC)$), and then one can apply Lemma $2.23$ of \cite{FL} or Section $3$ of \cite{Gro} to see that $W^1_{0, \bR}$ corresponds to the canonical CY $\bR$-VHS on $\calD_1$. Similarly, for $W^i_{0, \bR} := W_0 \otimes_{E_0, \sigma_i} \bR$ with $2 \leq i \leq d$, we have that $W^i_{0,\bR} \otimes_{\bR} \bC \cong \bigwedge^n_{\bC} U_{i,\bR}$ which is an irreducible representation of $\SL(2n, \bC)$ with highest weight $\varpi_n$. In particular, we see that $W^i_{0, \bR}$ is of real type as an irreducible representation of $G _{i, \bR} = \SU(p_i, q_i)$ (or one can apply \cite{GGK} (IV.E.4) to see that the conditions $p_i  \equiv n$ (mod $2$) imply this). Now by \cite{FL} $(2.12)$ the Hermitian VHS over $\calD_i = (A_{2n-1}, \alpha_{p_i})$ given by the representation $W^i_{0, \bR}$ has level $2\varpi_n(H_0)$, where $H_0$ is the differential of a reference point on $\calD_i$ (c.f. Section $2.1$ of \cite{FL}). It can be easily seen that $\varpi_n(H_0)$ equals the coefficient of $\alpha_{p_i}$ in the expression of $\varpi_n$ (for the root system $A_{2n-1}$), and hence $2\varpi_n(H_0) = p_i$. Since $p_i < n$ for $i \geq 2$, only the CY piece $W^1_{0, \bR}$ contributes to the Hodge number $h^{n,0}$ and hence $\Res_{E_0/\bQ} W_0$ gives a $\bQ$-Hermitian VHS of CY type. To complete the proof, let us note that $E_0 \subseteq \End_{G'}(\Res_{E_0/\bQ} W_0)$. Moreover, using the first remark in the proof of \cite{FL} Proposition $3.7$ and the irreducibility of the representations $W^i_{0, \bR}$ ($1 \leq i \leq d$), we know that $\End_{G'}(\Res_{E_0/\bQ} W_0)$ has dimension $d$, and hence $E_0$ is the generic endomorphism algebra and the corresponding CY Hermitian $\bQ$-VHS is irreducible.
\end{proof}

\begin{remark}
When $E = \bQ(\sqrt{-r})$ and $E_0 = \bQ$, $\pi: \calA \rightarrow \calH = (A_{2n-1}, \alpha_n)$ is a family of abelian varieties of Weil type. Furthermore, the proof of the Main theorem shows that $R^n\pi_*\bQ$ contains a sub-VHS of CY type whose scalar extension to $\bR$ is isomorphic to the canonical CY $\bR$-VHS over $\calH$. This generalizes the constructions in \cite{Lom} ($n=2$) and \cite{CaFl} ($n=3$ and $E = \bQ(\sqrt{-3})$). 
\end{remark}

\begin{remark} 
If $n$ is odd and $p_i=1$ for $i \geq 2$, then by Summary $10.8$ of \cite{M} every representation $W^i_{0, \bR} = W_0 \otimes_{E_0, \sigma_i} \bR$ ($i \geq 2$) determines a Hermitian VHS of abelian variety type over $(A_{2n-1}, \alpha_1)$. In particular, when $n=3$ we obtain a motivic realization of the CY Hermitian $\bQ$-VHS constructed in \cite{FL} Theorem $3.18$(ii)(a).
\end{remark}

\begin{remark}
We give a hint on how to compute the Hodge numbers of the Hermitian $\bR$-VHS over $(A_{2n-1}, \alpha_{p_i})$ corresponding to the representation $W^i_{0, \bR}$ for a fixed $i$ ($1 \leq i \leq d$). We shall use the notations in \cite{FH_rep}. Specifically, we let $L_j$ ($1 \leq j \leq 2n$) be those defined in Section $15.1$ of op. cit. (N.B. $L_1 + \cdots + L_{2n} = 0$). Then a set of simple roots of $A_{2n-1}$ are given by $\alpha_j = L_j - L_{j+1}$ with $1 \leq j \leq 2n-1$ and the corresponding fundamental weights are $\varpi_j = L_1 + \cdots + L_j$ with $1 \leq j \leq 2n-1$. We have seen in the proof of the Main theorem that $W^i_{0, \bR} \otimes_{\bR} \bC$ has highest weight $\varpi_n$ and thus the corresponding character is $\sum e(\xi)$, the sum over all $\xi$ that are sums of $n$ different $L_j$ with $1 \leq j \leq 2n$ (c.f. Page $377$ of op. cit.). It is straightforward to check that (using for example $L_j = \varpi_j - \varpi_{j-1}$ and \cite{Humphreys} Section $13.1$ Table $1$)
$$\text{The coefficient of} \ \alpha_{p_i} \ \text{in} \ L_j  = 
\begin{cases}
\frac{2n - p_i}{2n} &\text{if}  \ \ 1 \leq j \leq p_i, \\
-\frac{p_i}{2n} &\text{if} \ \ p_i +1 \leq j \leq 2n.
\end{cases}$$ 
Now one can see that if $\xi$ contains exactly $s$ ($0 \leq s \leq p_i$) elements of $\{L_1, \cdots, L_{p_i}\}$, then $2\xi(H_0) = 2s-p_i$, and by \cite{FL} $(2.12)$ the weight space of $\xi$ (which is $1$-dimensional) contributes to the Hodge number $h^{s, p_i-s}$. To conclude, the Hermitian VHS corresponding to $W^i_{0, \bR}$ has level $p_i$ and the Hodge numbers (assuming the weight is $p_i$) are
$$h^{s, p_i-s} = {p_i\choose s}{2n-p_i \choose n-s}, \ \ \ 0 \leq s \leq p_i.$$ 
\end{remark}

\bibliography{ref_z}

\begin{thebibliography}{GGK12}

\bibitem[BL04]{BLabelianvar}
C.~Birkenhake and H.~Lange.
\newblock {\em Complex abelian varieties}, volume 302 of {\em Grundlehren der
  Mathematischen Wissenschaften [Fundamental Principles of Mathematical
  Sciences]}.
\newblock Springer-Verlag, Berlin, second edition, 2004.

\bibitem[Bor97]{Bor}
C.~Borcea.
\newblock {$K3$} surfaces with involution and mirror pairs of {C}alabi-{Y}au
  manifolds.
\newblock In {\em Mirror symmetry, {II}}, volume~1 of {\em AMS/IP Stud. Adv.
  Math.}, pages 717--743. Amer. Math. Soc., Providence, RI, 1997.

\bibitem[CF12]{CaFl}
S.~L. Cacciatori and S.~A. Filippini.
\newblock The {$E^3/\mathbb{Z}_3$} orbifold, mirror symmetry, and {H}odge
  structures of {C}alabi-{Y}au type.
\newblock arXiv:1201.5057, 2012.

\bibitem[Del79]{Dshimura}
P.~Deligne.
\newblock Vari\'et\'es de {S}himura: interpr\'etation modulaire, et techniques
  de construction de mod\`eles canoniques.
\newblock In {\em Automorphic forms, {R}epresentations and {$L$}-functions},
  Proc. Sympos. Pure Math., XXXIII, pages 247--289. Amer. Math. Soc.,
  Providence, R.I., 1979.

\bibitem[FH91]{FH_rep}
W.~Fulton and J.~Harris.
\newblock {\em Representation theory: A first course}, volume 129 of {\em
  Graduate Texts in Mathematics}.
\newblock Springer-Verlag, New York, 1991.

\bibitem[FL13]{FL}
R.~Friedman and R.~Laza.
\newblock Semialgebraic horizontal subvarieties of {C}alabi-{Y}au type.
\newblock {\em Duke Math. J.}, 162(12):2077--2148, 2013.

\bibitem[FL14]{FL_2}
R.~Friedman and R.~Laza.
\newblock On some {H}ermitian variations of {H}odge structure of {C}alabi-{Y}au
  type with real multiplication.
\newblock {\em Michigan Math. J.}, 63(1):83--99, 2014.

\bibitem[GGK12]{GGK}
M.~Green, P.~Griffiths, and M.~Kerr.
\newblock {\em Mumford-{T}ate groups and domains}, volume 183 of {\em Annals of
  Mathematics Studies}.
\newblock Princeton University Press, Princeton, NJ, 2012.

\bibitem[Gro94]{Gro}
B.~H. Gross.
\newblock A remark on tube domains.
\newblock {\em Math. Res. Lett.}, 1(1):1--9, 1994.

\bibitem[GS06]{Quaternion}
P.~Gille and T.~Szamuely.
\newblock {\em Central simple algebras and {G}alois cohomology}, volume 101 of
  {\em Cambridge Studies in Advanced Mathematics}.
\newblock Cambridge University Press, Cambridge, 2006.

\bibitem[GvG10]{vGG}
A.~Garbagnati and B.~van Geemen.
\newblock Examples of {C}alabi-{Y}au threefolds parametrised by {S}himura
  varieties.
\newblock {\em Rend. Semin. Mat. Univ. Politec. Torino}, 68(3):271--287, 2010.

\bibitem[Hum72]{Humphreys}
J.~E. Humphreys.
\newblock {\em Introduction to {L}ie algebras and representation theory}.
\newblock Springer-Verlag, New York-Berlin, 1972.
\newblock Graduate Texts in Mathematics, Vol. 9.

\bibitem[Lom01]{Lom}
G.~Lombardo.
\newblock Abelian varieties of {W}eil type and {K}uga-{S}atake varieties.
\newblock {\em Tohoku Math. J. (2)}, 53(3):453--466, 2001.

\bibitem[Mil13]{M}
J.~S. Milne.
\newblock Shimura varieties and moduli.
\newblock In {\em Handbook of moduli. {V}ol. {II}}, volume~25 of {\em Adv.
  Lect. Math. (ALM)}, pages 467--548. Int. Press, Somerville, MA, 2013.

\bibitem[PR94]{NT}
V.~Platonov and A.~Rapinchuk.
\newblock {\em Algebraic groups and number theory}, volume 139 of {\em Pure and
  Applied Mathematics}.
\newblock Academic Press Inc., Boston, MA, 1994.
\newblock Translated from the 1991 Russian original by Rachel Rowen.

\bibitem[Roh09]{Rohde}
J.~C. Rohde.
\newblock {\em Cyclic coverings, {C}alabi-{Y}au manifolds and complex
  multiplication}, volume 1975 of {\em Lecture Notes in Mathematics}.
\newblock Springer-Verlag, Berlin, 2009.

\bibitem[Roh10]{Rohde_2}
J.~C. Rohde.
\newblock {M}aximal automorphisms of {C}alabi-{Y}au manifolds versus maximally
  unipotent monodromy.
\newblock {\em Manuscripta Math.}, 131(3-4):459--474, 2010.

\bibitem[Sat65]{Satake}
I.~Satake.
\newblock Holomorphic imbeddings of symmetric domains into a {S}iegel space.
\newblock {\em Amer. J. Math.}, 87:425--461, 1965.

\bibitem[Shi63]{Sabelianvar}
G.~Shimura.
\newblock On analytic families of polarized abelian varieties and automorphic
  functions.
\newblock {\em Ann. of Math. (2)}, 78:149--192, 1963.

\bibitem[SZ10]{ZS}
M.~Sheng and K.~Zuo.
\newblock Polarized variation of {H}odge structures of {C}alabi-{Y}au type and
  characteristic subvarieties over bounded symmetric domains.
\newblock {\em Math. Ann.}, 348(1):211--236, 2010.

\bibitem[vG94]{vG}
B.~van Geemen.
\newblock An introduction to the {H}odge conjecture for abelian varieties.
\newblock In {\em Algebraic cycles and {H}odge theory ({T}orino, 1993)}, volume
  1594 of {\em Lecture Notes in Math.}, pages 233--252. Springer, Berlin, 1994.

\bibitem[vG08]{vG_real}
B.~van Geemen.
\newblock Real multiplication on {$K3$} surfaces and {K}uga-{S}atake varieties.
\newblock {\em Michigan Math. J.}, 56(2):375--399, 2008.

\bibitem[Voi93]{V}
C.~Voisin.
\newblock Miroirs et involutions sur les surfaces {$K3$}.
\newblock {\em Ast\'erisque}, (218):273--323, 1993.
\newblock Journ{\'e}es de G{\'e}om{\'e}trie Alg{\'e}brique d'Orsay (Orsay,
  1992).

\end{thebibliography}
\end{document}